\newtheorem{theorem}{Theorem}
\newtheorem{proposition}[theorem]{Proposition}
\theoremstyle{definition}
\begin{document}

\title[The Bulgarian solitaire and the mathematics around it]
{The Bulgarian solitaire\\
and the mathematics around it}

\author[Vesselin Drensky]
{Vesselin Drensky}
\address{Institute of Mathematics and Informatics,
Bulgarian Academy of Sciences,
Acad. G. Bonchev Str., Block 8, 1113 Sofia, Bulgaria}
\email{drensky@math.bas.bg}

\thanks
{This project was partially supported by Grant I 02/18
``Computational and Combinatorial Methods
in Algebra and Applications''
 of the Bulgarian National Science Fund.}

\subjclass[2010]
{Primary: 00A08; Secondary: 05A17, 11P81, 97A20}
\keywords{Bulgarian solitaire, partitions, discrete dynamical systems, card games}

\begin{abstract}
The Bulgarian solitaire is a mathematical card game played by one person.
A pack of $n$ cards is divided into several decks (or ``piles'').
Each move consists of the removing of one card from each deck
and collecting the removed cards to form a new deck.
The game ends when the same position occurs twice.
It has turned out that when $n=k(k+1)/2$ is a triangular number,
the game reaches the same stable configuration with size of
the piles $1,2,\ldots,k$. The purpose of the paper is to tell
the (quite amusing) story of the game and to discuss mathematical
problems related with the Bulgarian solitaire.
\end{abstract}

\maketitle

\begin{center}
Dedicated to the memory of Borislav Bojanov (1944--2009),\\
a great mathematician, person, and friend.
\end{center}
\section{The story}
The popularity of the Bulgarian solitaire started around 1980. Below we present the version
of Borislav Bojanov \cite{B2} who is one of the main protagonists in the story.

The problem was brought to Bulgaria by the famous number theorist Anatolii Karatsuba
from the Steklov Mathematical Institute in Moscow.
In May 1980 he visited
the Institute of Mathematics and Informatics at the Bulgarian Academy of Sciences in Sofia.
Once, after his lecture at the Seminar of Approximation Theory, he told his Bulgarian colleagues the story of the problem.

Konstantin Oskolkov, in that time professor at the Steklov Institute,
was traveling from Moscow to Leningrad (now Saint Petersburg)
in the night, by the  fastest train in the Soviet Union, the so called ``Red Arrow''.
There was another man in his compartment and they started a
conversation. When the other man learned that Konstantin Oskolkov is a mathematician,
he showed him the following game.

{\it A pack of $n=1+2+\cdots+k$ cards is divided in an arbitrary way in several packs.
Each move consists of the removing of one card from each deck
and collecting the removed cards to form a new deck. Surprisingly, it turns out that after several moves
one reaches the stable position of $k$ piles consisting of $1,2,\ldots,k$ cards, respectively.}
(The legend claims that the game was illustrated with several experiments with 15 cards.)

For example, starting with a deck of 10 cards divided in three packs of size 4, 3, 3, as in Fig. 1,
we obtain a new pack of 3 cards and the number of cards in the old packs decreases to 3, 2, 2, respectively.
\begin{figure}[htb]
\begin{center}
\epsfxsize=12cm
$$\epsfbox{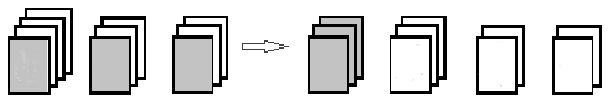}$$

Fig. 1. A deck of 10 cards is divided in three packs of size 4, 3, 3.
\end{center}
\end{figure}
It is more convenient to denote only the size of the packs, ordering the sizes in nonincreasing order.
For example, starting from the position $(4,3,3)$, we have marked the new size of the new pack in bold and
have consecutively
\[
(4,3,3)\Rightarrow (\text{\bf 3},3,2,2)\Rightarrow (\text{\bf 4},2,2,1,1)\Rightarrow (\text{\bf 5},3,1,1)
\]
(In the last step the two packs consisting of a single card disappear.) Then we continue
\[
(5,3,1,1)\Rightarrow (\text{\bf 4},4,2)\Rightarrow (\text{\bf 3},3,3,1)\Rightarrow (\text{\bf 4},2,2,2)
\]
\[
\Rightarrow (\text{\bf 4},3,1,1,1)\Rightarrow (\text{\bf 5},3,2)\Rightarrow (4,\text{\bf 3},2,1)\Rightarrow(\text{\bf 4},3,2,1).
\]
In this way we obtain the stable position $(4,3,2,1)$.

Returning back to Moscow, Konstantin Oskolkov told the problem to the people of the Department of Number Theory
at the Steklov Institute. Anatolii Karatsuba described this moment in the following way. ``When
Genadii Arkhipov (professor in Number Theory  who liked very much
nice problems) learned about the problem, his face took a Satanic expression, he
ran to his office, closed the door and did not came out until he  solved
the problem.''

Borislav Bojanov also liked very much nice problems. He went home, waited until the
children went to the bed and then started to think about it. Around
midnight he found a solution and was very happy. The next day he shared the
solution with some of his colleagues. Pencho Petrushev said that he also
had a solution. Milko Petkov who was an editor of the Bulgarian high school
mathematical journal ``Obuchenieto po matematika'' (``Education in Mathematics'')
published the problem in the section ``Competition Problems'' in the issue 5 of
1980. Since no student submitted a solution, in 1981 the Editorial Board of the
journal decided to publish the solution of Borislav Bojanov \cite{B1}.

Approximately in the same time the problem was published by
S. Limanov and A. L. Toom in the issue 11 of 1980 of the Russian mathematical journal ``Kvant''.
The solution of Toom \cite{T} appeared also in 1981. It contains also some analysis of the general case
of an arbitrary number of cards.
It seems that \cite{B1} and \cite{T} are the first published solutions of the Bulgarian
solitaire.

In that time the Swedish mathematician Gert Almkvist from the University of Lund visited
the Department of Algebra at the Institute of Mathematics and Informatics in Sofia. When he learned the problem
he brought it to Sweden and told it to his colleagues including his friend Henrik Eriksson from
the Royal Institute of Technology in Stockholm. In 1981 Eriksson wrote the paper \cite{E} where he also presented a solution
for the puzzle and gave it the name
{\it Bulgarian solitaire} ({\it Bulgarisk patiens} in Swedish). Later he visited the USA and spread the puzzle there.
J{\o}rgen Brandt from the Aarhus University, Denmark, also learnt
about the problem but without its name, and in 1982 published another solution \cite{Br}, where he also analyzed the general case.
(Brandt starts his paper with ``The problem to be discussed in the following has been circulating for some time.'')
In 1982 Donald Knuth used the Bulgarian solitaire to start his
Programming and Problem-Solving Seminar in Stanford \cite{HKn}. Finally, with the help of Ron Graham the problem reached Martin Gardner who included it
in his paper \cite{G}. The paper by Gardner was the starting point of the popularity of the Bulgarian solitaire among mathematicians
all over the world and was the main source of references for many years. For already 35 years the Bulgarian solitaire and its generalizations
continue to inspire new research in combinatorics, game theory, probability, computer science, and to be an object of intensive study in
research and teaching literature. Due to the efforts of Henrik Eriksson in 2005
and the paper by Brian Hopkins \cite{H1} in 2012, recently the real story of the Bulgarian solitaire finally reached the large audience.

\section{The solution}

In the first publications on the Bulgarian solitaire \cite{B1, T, E}
the main problem is stated in three different ways. In the Bulgarian version \cite{B1} there are $k(k+1)/2$ balls grouped in $m$ piles.
In the Russian version \cite{T} a clerk from the Circumlocution Office\footnote{The Circumlocution Office is a place of endless confusion
in {\it Little Dorrit} by Charles Dickens.}
rearranges piles of volumes of Encyclop{\ae}dia Britannica.
The Swedish text \cite{E} handles packs of cards. Nevertheless the three solutions use similar ideas.
An exposition of Toom's proof \cite{T} with more details can be found in \cite{HKK}.

As we already mentioned, instead of considering packs of cards, we may consider the sequence of the number of cards in each pack.
Since we are not interested in the order of the packs, we may order the integers in the sequence in nonincreasing way. A finite sequence of
nonnegative integers
\[
\lambda=(\lambda_1,\ldots,\lambda_c),\quad \lambda_1\geq\ldots\geq\lambda_c\geq 0,\quad \lambda_1+\cdots+\lambda_c=n,
\]
is called a {\it partition} of $n$. (The standard notation is $\lambda\vdash n$.)
The partition $\lambda=(\lambda_1,\ldots,\lambda_c)$ is visualized by its {\it Young diagram} $[\lambda]$ (also called {\it Ferrers diagram}
when represented using dots)
consisting of boxes arranged in left-justified rows, with $\lambda_i$ boxes in the $i$-th row. For example, the Young diagram of the partition
$\lambda=(4,3,3)\vdash 10$ is in Fig. 2.

\begin{center}
\yng(4,3,3)\\
Fig. 2. $[\lambda]=[4,3,3]$
\end{center}

\noindent For our purposes it is more convenient to rotate the Young diagram on $90^{\circ}$, when the height of each row is equal to
the number of cards in the corresponding pack, see Fig. 3.
\begin{figure}[htb]
\begin{center}
$$
\epsfbox{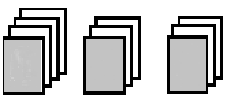}\quad\stackrel{\Rightarrow}{\phantom{\Downarrow}}
\quad\stackrel{(4,3,3)}{\phantom{\Downarrow}}
\quad\stackrel{\Rightarrow}{\phantom{\Downarrow}}\quad \yng(4,3,3)
\quad \stackrel{\Rightarrow}{\phantom{\Downarrow}}\quad\yng(1,3,3,3)
$$
Fig. 3.
\end{center}
\end{figure}
\noindent Then the move in the Bulgarian solitaire consists of removing the bottom row of the (rotated) Young diagram
and adding it as a column, as shown in Fig. 4.
\begin{figure}[htb]
\begin{center}
$$
\epsfbox{10_cards_bis.eps}\quad\stackrel{\Rightarrow}{\phantom{\Downarrow}}\quad
\young(\:,\:\:\:,\:\:\:,\times\times\times)
\quad \stackrel{\Rightarrow}{\phantom{\Downarrow}}\quad
\young(\times\:,\times\:\:\:,\times\:\:\:)
\quad \stackrel{\Rightarrow}{\phantom{\Downarrow}}\quad
\epsfbox{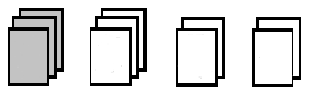}
$$
Fig. 4.
\end{center}
\end{figure}
\noindent In the language of partitions, we start with a partition $\lambda=(\lambda_1,\ldots,\lambda_c)$ with $\lambda_c>0$
and obtain the partition ${\mathcal B}(\lambda)=(c,\lambda_1-1,\ldots,\lambda_c-1)$. Clearly, if
$\lambda_i-1>c\geq \lambda_{i+1}-1$, as in Fig. 5, then we assume that
${\mathcal B}(\lambda)=(\lambda_1-1,\ldots,\lambda_i-1,c,\lambda_{i+1}-1,\ldots,\lambda_c-1)$.
\begin{center}
\[
\young(\cdot,\cdot,\cdot,\cdot\:,\cdot\:,\times\times\times)
\quad \stackrel{\Rightarrow}{\phantom{\Downarrow}}\quad
\young(\times,\times,\times)\young(\cdot,\cdot,\cdot,\cdot\:,\cdot\:)
\quad \stackrel{\Rightarrow}{\phantom{\Downarrow}}\quad
\young(\cdot,\cdot,\cdot\times,\cdot\times\:,\cdot\times\:)
\]
Fig. 5.
\end{center}
This is a typical example of a {\it discrete dynamical system}.
We consider the set ${\mathcal P}(n)$ of all partitions of $n$ and the operator $B:{\mathcal P}(n)\to{\mathcal P}(n)$
which in a period of time changes the state of the system, the partition $\lambda$, to the new state, the partition ${\mathcal B}(\lambda)$.
Hence  $\mathcal B$ plays the role of the {\it updating function of the system}.
The main problem is, starting with the initial state $\lambda^{(0)}$ to determine the state of the system
$\lambda^{(t)}={\mathcal B}(\lambda^{(t-1)})$ which it will reach after
some interval of time $t$. Since we have a finite number of states ${\mathcal P}(n)$ only, we may associate to the discrete dynamical system
its oriented graph with vertices the partitions $\lambda$ of $n$ and oriented edges $(\lambda,{\mathcal B}(\lambda))$. In Fig. 6 we give the graph
for $n=6$ and $n=7$.

\begin{figure}[htb]
\begin{center}
$${\epsfxsize=5cm\epsfbox{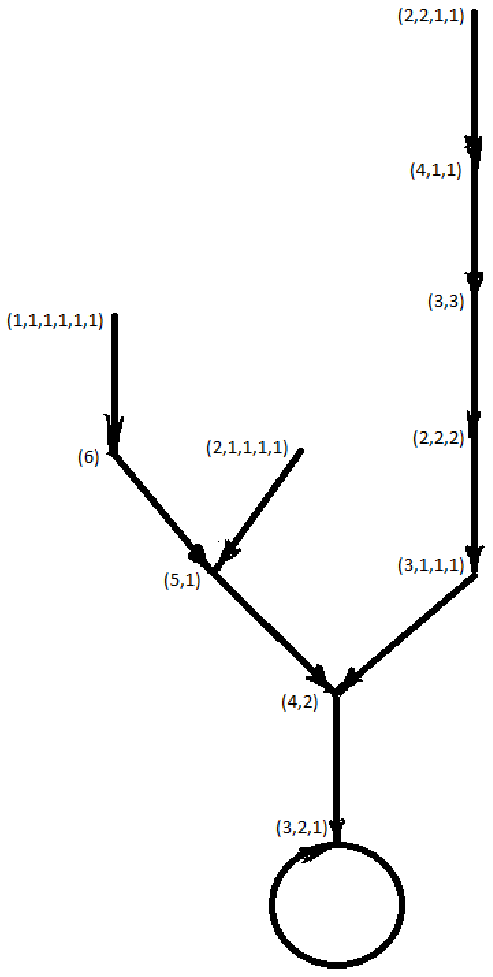}}\quad {\epsfxsize=8cm\epsfbox{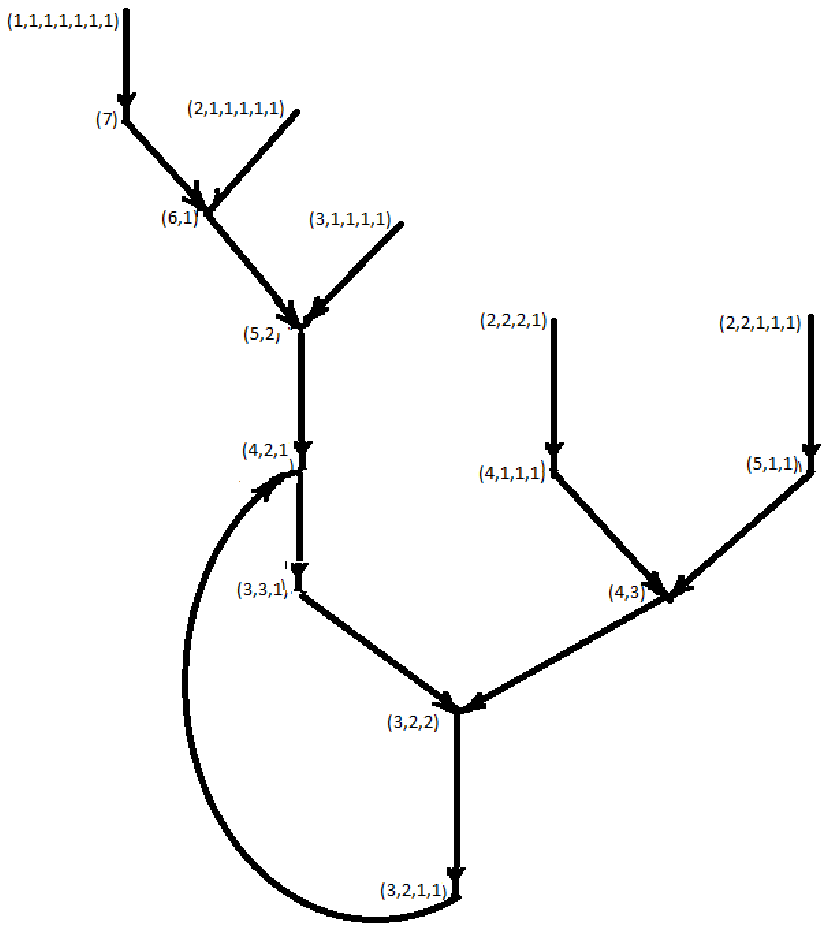}}$$

Fig. 6. The graph for the Bulgarian solitaire for $n=6$ and $n=7$.
\end{center}
\end{figure}

We shall present the solution of the Bulgarian solitaire from
\cite{T} modified in the spirit of the exposition in \cite{Br} and the solution proposed by Anders Bj\"orner, according to
the student essays \cite{O, Gr}. There are also several other solutions, using different arguments, see, e.g., the inductive proof of
Me\v{s}trovi\'c \cite{Me}.

\begin{theorem}\label{theorem of Bulgarian solitaire}
When the total number $n=k(k+1)/2$ of cards is triangular, the Bulgarian solitaire will converge
into piles of size $1, 2,\ldots,k$.
\end{theorem}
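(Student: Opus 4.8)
The plan is to treat $\mathcal{B}$ as a deterministic dynamical system on the finite set $\mathcal{P}(n)$ and to exhibit a Lyapunov (monovariant) function. Since $\mathcal{P}(n)$ is finite and $\mathcal{B}$ is a map, every orbit is eventually periodic, so it suffices to prove that the only periodic orbit is the one-point orbit $\{\delta_k\}$, where $\delta_k=(k,k-1,\ldots,1)$. First I would record that $\delta_k$ really is a fixed point: decrementing each of its $k$ parts and adjoining a new part of size $c=k$ returns $(k,k-1,\ldots,1)$. It then remains to show that this is the only recurrent state.

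The main tool I would use is the \emph{anti-diagonal potential}
$$\Phi(\lambda)=\sum_{(i,j)\in[\lambda]}(i+j-1),$$
the sum over the cells of the Young diagram of the index of the anti-diagonal through the cell. Since anti-diagonal $d$ contains only $d$ cells, filling the lowest anti-diagonals first is forced, and the unique partition of a triangular number $n=\binom{k+1}{2}$ occupying exactly the anti-diagonals $1,\ldots,k$ is $\delta_k$; hence $\Phi(\lambda)\ge\Phi(\delta_k)$ with equality only for $\lambda=\delta_k$. A direct bookkeeping of the move (remove the first column of height $c$, shift the rest one step toward the corner, and reinsert the column as a new row of length $c$) gives the clean identity
$$\Phi(\mathcal{B}(\lambda))-\Phi(\lambda)=-\sum_{r:\,\lambda_r\ge c+2}(\lambda_r-c-1)\le 0,$$
so $\Phi$ never increases, and it stays constant exactly on the ``plateau''
$$E=\{\lambda\in\mathcal{P}(n):\lambda_1\le c+1\},$$
i.e. when the largest part exceeds the number of parts by at most one.

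Because $\Phi$ is integer-valued, bounded below by $\Phi(\delta_k)$, and strictly drops whenever $\lambda\notin E$, any orbit can leave $E$ only finitely often; hence every orbit is eventually trapped in $E$, and in particular every periodic orbit lies entirely in $E$. On $E$ the new part $c$ dominates all of $\lambda_1-1,\ldots,\lambda_c-1$, so the move simplifies to ``decrease every part by $1$ and adjoin a new part equal to the current number of parts''. Tracking a single part through its life then yields the parametrization $\lambda^{(t)}_s=c_{t-s}-s+1$ of the state by the sequence $c_t$ of part-counts, together with the conservation law $\sum_{s\ge1}(c_{t-s}-s+1)^+=n$ and, along any orbit that remains in $E$, the step bound $|c_t-c_{t-1}|\le 1$. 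The task reduces to showing that the only periodic such sequence with $n=\binom{k+1}{2}$ is the constant $c_t\equiv k$, for then $\lambda^{(t)}\equiv\delta_k$.

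This last step is where I expect the real difficulty, and it is exactly where triangularity of $n$ is indispensable: for non-triangular $n$ genuine nontrivial cycles exist (for instance $(2)\leftrightarrow(1,1)$ when $n=2$), so no purely one-step monovariant can finish the argument. To force constancy I would bring in a second invariant: a short computation shows $\sum_i\lambda_i^2$ changes by $c(c+1)-k(k+1)$ under one move, so summing over a period gives $\sum_t(c_t-k)^2=-(2k+1)\sum_t(c_t-k)$. Combined with the step bound $|c_t-c_{t-1}|\le1$, the extremal estimates $\binom{c_{\min}+1}{2}\le n\le\binom{c_{\max}+1}{2}$, and integrality, this is designed to exclude any nonconstant periodic profile. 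Assembling the pieces, $\Phi$ drives every orbit into the plateau, and triangularity collapses the plateau dynamics onto the single fixed point $\delta_k$, which is the assertion of the theorem.
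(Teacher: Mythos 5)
Your first half is sound and is in substance the paper's own argument: your $\Phi$ is exactly the potential energy $U$ of Bj\"orner's cradle model (heights $i+j$ along anti-diagonals, up to an additive constant), your identity $\Phi(\mathcal{B}(\lambda))-\Phi(\lambda)=-\sum_{\lambda_r\ge c+2}(\lambda_r-c-1)$ is a correct quantitative version of the observation that each box keeps its level until resorting makes the surplus boxes ``fall'' southwest, and the conclusion that every orbit is eventually trapped in the plateau $E$ (so every periodic orbit lies in $E$) is precisely the paper's reduction to configurations of minimal potential energy.

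The gap is in the endgame, and you have located it yourself but not closed it. The facts you propose to combine --- the period identity $\sum_t(c_t-k)^2=-(2k+1)\sum_t(c_t-k)$, the step bound $|c_t-c_{t-1}|\le 1$, the bounds $\binom{c_{\min}+1}{2}\le n\le\binom{c_{\max}+1}{2}$, and integrality --- are all true along a periodic orbit, but they do not force $c_t\equiv k$. Concretely, for $k=2$, $n=3$, the period-$10$ profile $c=(1,2,3,2,1,2,1,2,3,2)$ satisfies every one of these constraints (the identity reads $6a_3-4a_1=0$, where $a_j$ is the number of indices in a period with $c_t=j$, and here $a_1=3$, $a_3=2$), yet no such cycle exists. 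What actually kills such profiles is the full consistency of the parametrization $\lambda^{(t)}_s=(c_{t-s}-s+1)^+$ at \emph{every} time $t$, namely $\sum_{s\ge1}(c_{t-s}-s+1)^+=n$ together with $c_t=\#\{s:c_{t-s}\ge s\}$; extracting $c_t\equiv k$ from that is essentially Brandt's classification of cycles, which you would still have to carry out. The paper finishes more cheaply with the same potential: at a state of minimal potential let $r<k$ be the number of completely filled levels; then level $r+1$ has a hole and level $r+2$ has a box, the hole recurs with period $r+1$ and the box with period $r+2$, and since $\gcd(r+1,r+2)=1$ they must eventually align, after which the box falls and the potential drops --- a contradiction. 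You should either adopt that coprimality argument or genuinely prove the plateau classification; as written, the last step is a plan rather than a proof.
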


\begin{proof}
We use the brilliant visualization of the Bulgarian solitaire, {\it the cradle model}, suggested by Bj\"orner.
Let $\lambda=(\lambda_1,\ldots,\lambda_c)\vdash n$, $\lambda_c>0$, be the partition corresponding to the given collection of card packs.
We turn the Young diagram $[\lambda]$ counter-clockwise by $45^{\circ}$, see Fig. 7,
and further consider this $45^{\circ}$-turn interpretation of $[\lambda]$.
\begin{figure}[htb]
\begin{center}
\[
\epsfxsize=3cm
\epsfbox{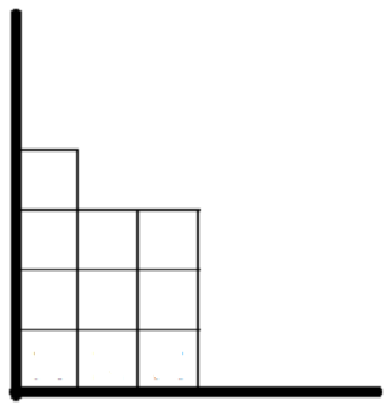}
\stackrel{\Rightarrow}{\begin{matrix}\phantom{\Downarrow}\\
\phantom{\Downarrow}\\
\phantom{\Downarrow}\\
\end{matrix}}
\begin{turn}{45}
\epsfxsize=3cm
\epsfbox{rotate_0.eps}
\end{turn}
\]
Fig. 7. $\lambda=(4,3,3)$.
\end{center}
\end{figure}

\noindent
Assuming that the boxes of $[\lambda]$
are material points with the same mass $m$, we consider the potential energy of the system
\[
U(\lambda)=mg\sum h_{ij},
\]
where $g\approx 9,8 \text{ m/s}^2$ is the free fall acceleration on Earth,
the sum runs on all boxes of $[\lambda]$,
and $h_{ij}$ is the height of the center of the box with coordinates $(i,j)$ corresponding to the $j$-th card of the $i$-th pack.
Clearly, $h_{ij}$ is proportional to $i+j$ and we may assume that it is equal to $i+j$ units.
As we discussed above, the move of the Bulgarian solitaire removes the $c$ boxes of the bottom row of $[\lambda]$
and adds them as the first column, as shown in Fig. 4. In this way, the box $(j,1)\in [\lambda]$ becomes the box $(1,j)\in[{\mathcal B}(\lambda)]$.
Obviously, in the $45^{\circ}$-turn interpretation the potential energy of these $c$ boxes of $[\lambda]$ does not change.
The other $n-c$ boxes of $[\lambda]$ move one step to the right, from the position $(i,j)$, $j>1$, to the position $(i+1,j-1)$.
Hence they also preserve their potential energy.
Therefore, if $c\geq \lambda_1-1$, as in Fig. 8, the move of the Bulgarian solitaire
forces the boxes to cycle on the same level and
preserves the potential energy of the Young diagram.
\begin{figure}[htb]
\begin{center}
\[
\begin{turn}{45}
\epsfxsize=3cm
\epsfbox{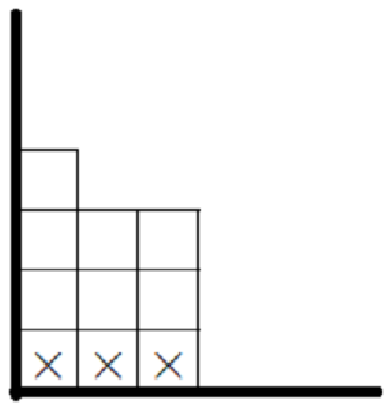}
\end{turn}
\stackrel{\Rightarrow}{\begin{matrix}\phantom{\Downarrow}\\
\phantom{\Downarrow}\\
\phantom{\Downarrow}\\
\end{matrix}}
\begin{turn}{45}
\epsfxsize=3cm
\epsfbox{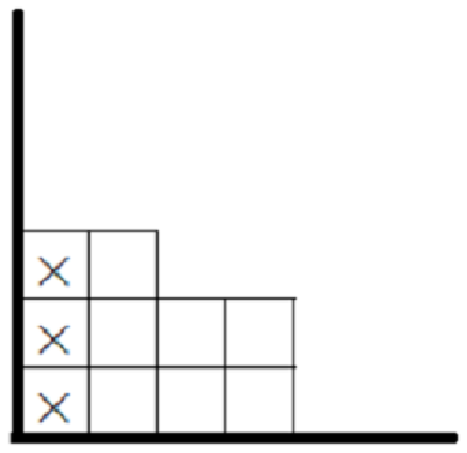}
\end{turn}
\]
Fig. 8. ${\mathcal B}(4,3,3)=(3,3,2,2)$.
\end{center}
\end{figure}
\noindent If $c<\lambda_1-1$, as in Fig. 9, then by the gravity the excessive boxes of the second pile will
fall down southwest and the potential energy of the Young diagram will decrease.
\begin{figure}[htb]
\begin{center}
\[
\begin{turn}{45}
\epsfxsize=2cm
\epsfbox{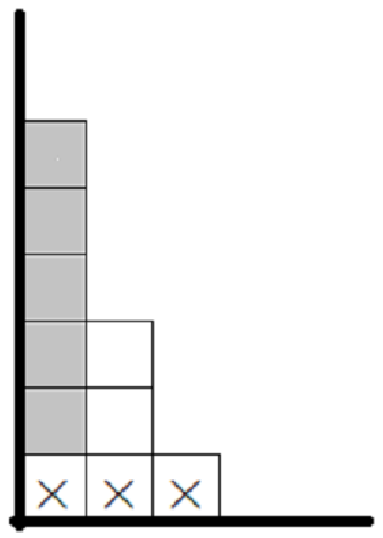}
\end{turn}
\stackrel{\Rightarrow}{\begin{matrix}\phantom{\Downarrow}\\
\phantom{\Downarrow}\\
\phantom{\Downarrow}\\
\end{matrix}}
\begin{turn}{45}
\epsfxsize=2cm
\epsfbox{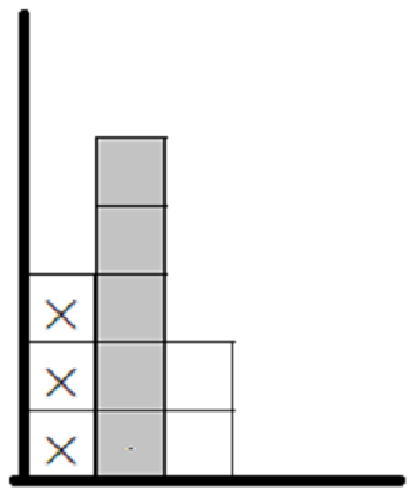}
\end{turn}
\stackrel{\Rightarrow}{\begin{matrix}\phantom{\Downarrow}\\
\phantom{\Downarrow}\\
\phantom{\Downarrow}\\
\end{matrix}}
\begin{turn}{45}
\epsfxsize=2cm
\epsfbox{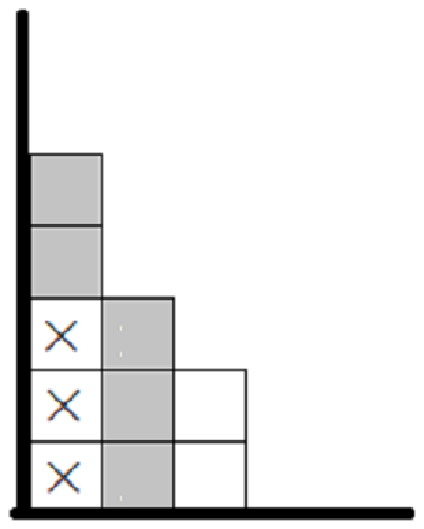}
\end{turn}
\]
Fig. 9. ${\mathcal B}(6,3,1)=(3,5,2)=(5,3,2)$.
\end{center}
\end{figure}
\noindent Since the partitions of $n$ are a finite number, we shall reach the position when the moves do not
decrease the height of the boxes and the potential energy of the system.
In this moment, let $r$ be the maximal integer with the property that
the first $r$ levels of the Young diagram consisting of boxes $(i,j)$ of height
$i+j=2,3,\ldots,r+1$, do not have empty places. Hence these $r$ levels contain $1+2+\cdots+r=r(r+1)/2$ boxes.
If $r=k$, then we have reached the stable position $(k,k-1,\ldots,2,1)$. Otherwise, $n-r(r+1)/2=k(k+1)/2-r(r+1)/2\geq k>r$.
Hence, the $(r+1)$-th level has an empty place and the $(r+2)$-nd level contains at least one box. Each move pushes this box
one place to the right and, when it reaches the most right position, it starts again from the most left position. The period of the
repetition of the positions is $r+2$.
Hence after several moves this box will be in position $(2,r+1)$. Now we follow the position of one of the empty places in level $r+1$.
It also moves to the right with period $r+1$. Since the integers $r+1$ and $r+2$ are relatively prime,
 in the moments $0, (r+2),2(r+2),\ldots,r(r+2)$ the empty place with be in
pairwise different places. In some moment it will be in the most left position $(1,r+1)$, as in Fig. 9.
Therefore the box in position $(2,r+1)$ will move to the empty position $(1,r+1)$, decreasing the potential energy of the system,
which is a contradiction. Hence the minimal potential energy is reached in the stable position $(k,k-1,\ldots,2,1)$ only.
\end{proof}

Now we shall consider the general case of any $n$. The first considerations are in \cite{T}, the detailed study was done in \cite{Br}.
Consider the oriented graph associated with the set ${\mathcal P}(n)$ of all partitions of $n$
with vertices the partitions $\lambda\in{\mathcal P}(n)$ and oriented edges $(\lambda,{\mathcal B}(\lambda))$. Clearly, the graph
consists of several components and, starting from any vertex of a given component, the multiple application of the operator $\mathcal B$
defines a cycle which is unique for the component. In Fig. 6 the cycle of the unique component of the graph of ${\mathcal P}(7)$ is
\[
\lambda=(4,2,1)\stackrel{\mathcal B}{\to}(3,3,1)\stackrel{\mathcal B}{\to}(3,2,2)\stackrel{\mathcal B}{\to}(3,2,1,1)\stackrel{\mathcal B}{\to}
{\mathcal B}^4(\lambda)=\lambda=(4,2,1).
\]

\begin{theorem}\label{The general case of the solitaire}
Let $n$ have the form $n=(k-1)k/2+r$, $0<r\leq k$, and let $\lambda\in{\mathcal P}(n)$. Then in the interpretation of the cradle model
the solitaire will converge with a cycle of partitions which consists of the triangular partition as bottom and $r$ surplus blocks cycling above.
The number of the components of the oriented graph associated with the partitions of $n$
is equal to the number of necklaces consisting of $r$ black beads and $k-r$ white beads, where the symmetry group of the necklace
is the cyclic group of order $k$. It is
\[
C(n)=\frac{1}{k}\sum_{d\vert(r,k)}\varphi(d)\binom{k/d}{r/d},
\]
where $(r,k)$ is the greatest common divisor of $r$ and $k$ and $\varphi(d)$ is the Euler $\varphi$-function, i.e., the number of
positive integers $\leq d$ and relatively prime to $d$.
\end{theorem}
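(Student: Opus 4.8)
The plan is to build on the cradle model and the potential-energy argument already used for Theorem~\ref{theorem of Bulgarian solitaire}. First I would repeat the descent: since ${\mathcal P}(n)$ is finite and $U(\lambda)=mg\sum h_{ij}$ never increases under $\mathcal B$ and strictly decreases as long as some box can fall southwest, every orbit must enter the set of energy-minimal partitions, on which $\mathcal B$ only shifts the boxes of each level one step to the right. To minimize $\sum h_{ij}$ one fills the levels from the bottom, and since the level of height $h$ has exactly $h-1$ cells, the levels of heights $2,\ldots,k$ together hold $1+2+\cdots+(k-1)=(k-1)k/2$ boxes and are completely filled, while the remaining $r$ boxes (recall $0<r\le k$) occupy $r$ of the $k$ cells of the level of height $k+1$. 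This already yields the first assertion: in the limit the triangular partition $(k-1,k-2,\ldots,1)$ sits as an immovable base and the $r$ surplus boxes cycle above it.

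Next I would set up the dictionary between cycles and necklaces. Because the $k-1$ lower levels are full, a move leaves them looking unchanged, so its only visible effect is a cyclic shift by one position of the occupancy pattern of the $k$ cells of the top level. Encoding that pattern as a binary word of length $k$ with $r$ ones (black beads for occupied cells, white for empty), one move is exactly rotation by one. Every such word is legal: with the level below completely full, each box of the top level has all of its lower neighbours present, so the set of boxes forms a genuine (rotated) Young diagram. Consequently two minimal-energy partitions lie on the same cycle, hence in the same connected component, precisely when their words are cyclic rotations of one another. Since each component contains a unique cycle, the components of the oriented graph are in bijection with the necklaces of $r$ black and $k-r$ white beads under the cyclic group $C_k$.

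The count is then a direct application of the Cauchy--Frobenius (Burnside) lemma to the action of $C_k=\langle\rho\rangle$ on the $\binom{k}{r}$ subsets of size $r$. The rotation $\rho^{j}$ splits the $k$ cells into $d=\gcd(j,k)$ orbits of equal length $k/d$, and a size-$r$ subset is fixed by $\rho^{j}$ exactly when it is a union of whole orbits; this forces $(k/d)\mid r$ and then leaves $\binom{d}{rd/k}$ choices. Grouping the $k$ rotations according to the value $d=\gcd(j,k)$, of which there are $\varphi(k/d)$, and reindexing by $e=k/d$ turns the Burnside average into
\[
C(n)=\frac1k\sum_{e\mid(r,k)}\varphi(e)\binom{k/e}{r/e},
\]
which is the asserted formula.

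I expect the real obstacle to be the structural claim behind the first two paragraphs rather than the counting, namely the verification that the energy-minimal (equivalently, cyclic) configurations are exactly those with full base and $r$ boxes on the $(k+1)$-st level. This is the point where one must reuse the coprimality trick from Theorem~\ref{theorem of Bulgarian solitaire}: if the lower $k-1$ levels were not completely filled, take the lowest level carrying a hole; the level directly above it still carries a box, and since consecutive levels have coprime numbers of cells the hole and the box move with coprime periods, so along the cycle they eventually occupy the same column and the box drops, lowering $U$ and contradicting minimality. Once the base is pinned down in this way, the legality of the words, the identification of a move with a rotation, the bijection between cycles and necklaces, and the Burnside computation are all routine.
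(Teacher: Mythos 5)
Your proposal is correct and follows essentially the same route as the paper: descend via the potential energy of the cradle model to the configurations with a full staircase base and $r$ surplus boxes on the top level, identify a move with a rotation of the $k$-cell top level, match components (each containing a unique cycle) with necklaces, and count orbits of the cyclic group. The only difference is one of detail: the paper merely asserts the minimal-energy characterization (referring back to the coprimality argument of Theorem~\ref{theorem of Bulgarian solitaire}) and cites the P\'olya enumeration theorem for the count, whereas you carry out both the coprimality step and the Burnside computation explicitly, and both of your added details check out.
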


\begin{proof}
As in the proof of Theorem \ref{theorem of Bulgarian solitaire}, we shall follow the potential energy $U({\mathcal B}^s(\lambda))$
of the partitions ${\mathcal B}^s(\lambda)$, $s=0,1,2\ldots$.
The minimum of the potential energy will be reached when the first $k-1$ levels in the cradle interpretation of
the diagram $[{\mathcal B}^s(\lambda)]$ are filled in with boxes, and there are no boxes in the $k+1$-st level. We shall identify
such a partition $[{\mathcal B}^s(\lambda)]$ of minimal potential energy with the necklace with $k$ beads, where the $i$-th bead
is black if there is a box in the $i$-th place of the $k$-th level of the diagram, and is white if the $i$-th place is empty,
see Fig. 10.
\begin{figure}[htb]
\begin{center}
\[
\begin{turn}{45}
\epsfxsize=2cm
\epsfbox{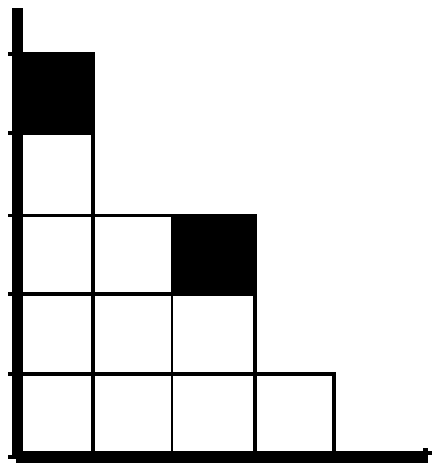}
\end{turn}
\stackrel{\Rightarrow}{\begin{matrix}\phantom{\Downarrow}\\
\phantom{\Downarrow}\\
\phantom{\Downarrow}\\
\end{matrix}}
\begin{turn}{45}
\epsfxsize=2cm
\epsfbox{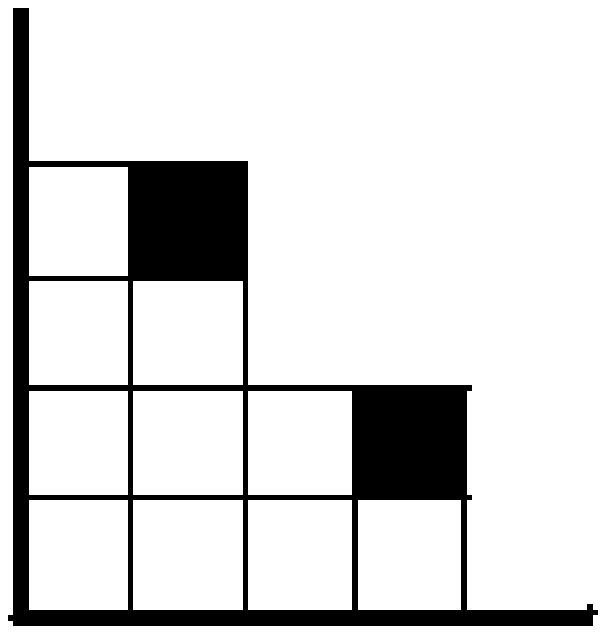}
\end{turn}
\stackrel{\Rightarrow}{\begin{matrix}\phantom{\Downarrow}\\
\phantom{\Downarrow}\\
\phantom{\Downarrow}\\
\end{matrix}}
\begin{turn}{45}
\epsfxsize=2cm
\epsfbox{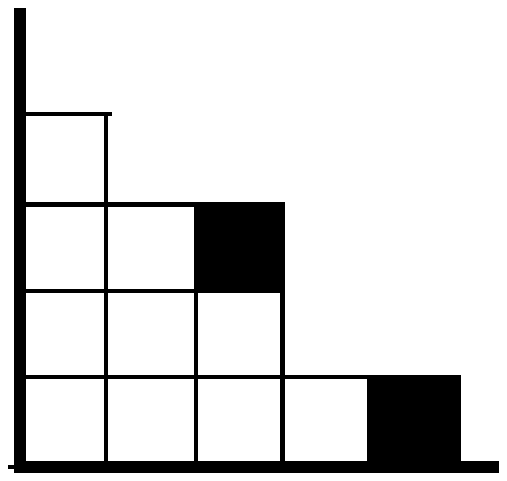}
\end{turn}
\stackrel{\Rightarrow}{\begin{matrix}\phantom{\Downarrow}\\
\phantom{\Downarrow}\\
\phantom{\Downarrow}\\
\end{matrix}}
\begin{turn}{45}
\epsfxsize=2cm
\epsfbox{12_to_rotate_1.eps}
\end{turn}
\]
\[
\Updownarrow
\]
\epsfxsize=5cm
\epsfbox{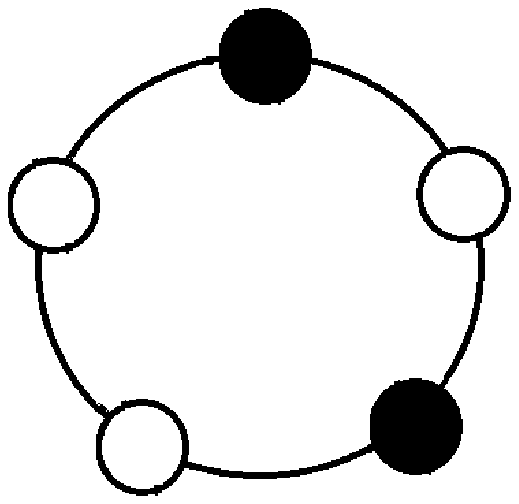}\\
Fig. 10. The cycle generated by (5,3,3,1) and the corresponding necklace.
\end{center}
\end{figure}
\noindent Since the application of the operator $\mathcal B$ moves to the right with period $k$ the boxes of the $k$-th level, we obtain that this
corresponds to the clockwise rotation of the necklace by $360^{\circ}/k$.
The number of necklaces with $r$ black and $k-r$ white beads can be obtained as in \cite{Br} and \cite{AD} as an easy application of the
P\'olya enumeration theorem, see \cite{Bi, DH}.
\end{proof}

In the case of triangular $n=k(k+1)/2$, already Toom \cite{T} raised the problem {\it to determine the longest path in the graph of ${\mathcal P}(n)$
to reach the stable partition $\sigma=(k,k-1,\ldots,2,1)$}.
He showed that, {\it starting from the partition $\tau=(k-1,k-1,k-2,k-3,k-4,\ldots,3,2,1,1)$, the minimal $s$
with the property $\sigma={\mathcal B}^s(\tau)$ is $s=k(k-1)$}. Knuth \cite{HKn} checked the equality
\[
\sigma=(k,k-1,\ldots,2,1)={\mathcal B}^{k(k-1)}(\lambda),\quad \lambda\in {\mathcal P}(k(k+1)/2),
\]
for $k\leq 5$. He asked his students to write a computer program to check it for $k\leq 10$ {\it and conjectured that this holds for any $k$}.
The conjecture of Knuth was proved by Igusa \cite{I} and Bentz \cite{Be}. Bentz also established the following interesting property of the partition
$\tau$ considered by Toom:
{\it The partitions ${\mathcal B}^i(\tau)$ and ${\mathcal B}^{k(k-1)-i-1}(\tau)$ are conjugate for $i=0,1,2,\ldots,k(k-1)-1$.}
This means that the related Young diagrams are obtained by reflection
with respect to the bisectrix from the origin of the first quadrant of the coordinate plane,
i.e., the lengths of the columns of one diagram are equal to the lengths of the rows of the other.
The general case of an arbitrary $n$ was studied by Etienne \cite{Et}.

In the theory of cellular automata, a {\it Garden of Eden configuration} is a configuration that cannot appear on the lattice after one time step,
no matter what the initial configuration. In other words, these are the configurations with no predecessors.
The terminology comes from the foundational paper \cite{M} by analogy with
the concept of the Garden of Eden which, following Semitic religions, was created out of nowhere.
Hopkins and Jones \cite{HJ} studied the {\it Garden of Eden partitions} ({\it GE-partitions}) defined by the property that they do not belong to the image
${\mathcal B}({\mathcal P}(n))$. It has turned out that {\it each cycle in the oriented graph of ${\mathcal P}(n)$ can be reached from a
GE-partition}. We shall mention only the following easy property and refer to \cite{HJ, HK, H2, HS} for more details and further developments.

\begin{proposition}\label{GE-partitions}
A partition $\lambda=(\lambda_1,\ldots,\lambda_s)\vdash n$, $\lambda_s>0$, is a GE-partition if and only if $\lambda_1<s-1$.
\end{proposition}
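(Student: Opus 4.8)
The plan is to unwind the definition: $\lambda$ is a GE-partition exactly when it has no $\mathcal{B}$-preimage, so I would first describe all partitions $\mu\vdash n$ that can produce a given $\lambda$ under one move, and then read off precisely when no such $\mu$ exists.

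First I would analyze the structure of a preimage. Suppose $\mathcal{B}(\mu)=\lambda$ for some $\mu=(\mu_1,\dots,\mu_c)$ with $\mu_c>0$. By the definition of the move, $\lambda$ (viewed as a multiset of parts) is obtained by creating one new part equal to $c$, the number of piles of $\mu$, and decreasing every $\mu_i$ by one, discarding the parts that become $0$. Thus exactly one part of $\lambda$, the freshly formed pile, equals $c$, while the remaining parts of $\lambda$ are precisely the values $\mu_i-1$ with $\mu_i\geq 2$.

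Next I would reverse the move to decide existence of $\mu$. To build a preimage of $\lambda=(\lambda_1,\dots,\lambda_s)$ I would single out one part $p=\lambda_j$ to play the role of the newly created pile, set $c=p$, add $1$ to each of the remaining $s-1$ parts (recovering the $\mu_i\geq 2$), and append $z$ parts equal to $1$ (the parts of $\mu$ that vanished during the move). Since $\mu$ must have exactly $c$ parts, I need $z=c-(s-1)=p-s+1$, and this yields a genuine partition exactly when $z\geq 0$. One checks that the total number of cards is automatically preserved: deleting the part $p$ and incrementing the other $s-1$ parts accounts for $(n-p)+(s-1)$ cards, and the $z=p-s+1$ unit parts restore the missing $p-s+1$, summing back to $n$.

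Finally I would conclude. A preimage exists if and only if some part $p$ of $\lambda$ satisfies $p\geq s-1$; since the largest part is $\lambda_1$, this holds if and only if $\lambda_1\geq s-1$. Negating, $\lambda$ is a GE-partition if and only if $\lambda_1<s-1$, which is the claim. The one delicate point, and the main obstacle, is the bookkeeping in the reverse step: one must treat the designated part $p$ together with the decremented parts as a multiset, so that a value coinciding with $c$ causes no difficulty, and verify that the single inequality $z\geq 0$ is exactly the constraint making the reconstructed $\mu$ a legitimate partition of $n$ with $c$ parts. Everything else is routine.
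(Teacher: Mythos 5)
Your proof is correct. The paper itself gives no proof of this proposition---it is stated as an ``easy property'' with a pointer to the literature---so there is nothing internal to compare against; your argument is the standard preimage-construction one (essentially that of Hopkins and Jones \cite{HJ}). Both directions are sound: any preimage $\mu$ with $c$ parts forces $c$ to occur as a part of $\lambda$ and to satisfy $c\ge s-1$ (since $\lambda$ has at most $c+1$ parts), hence $\lambda_1\ge s-1$; conversely, taking $p=\lambda_1\ge s-1$, adding $1$ to each of the remaining $s-1$ parts, and appending $p-s+1$ piles of size $1$ yields an explicit preimage with exactly $p$ positive parts, and your card count confirms it is a partition of $n$. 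The multiset bookkeeping you flag at the end is indeed the only delicate point, and you handle it correctly.
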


\section{Generalizations}

Before the paper by Hopkins \cite{H1}, only pieces of the history of the Bulgarian solitaire were known by the large mathematical community.
A couple of times the solitaire was rediscovered or called with other names. The case for triangular $n$ is known also as the {\it Karatsuba solitaire}.
(Since Karatsuba brought it to Bulgaria some people claimed that he invented the puzzle.) We shall discuss several generalizations
of the Bulgarian solitaire which have been studied in the literature.

\subsection{Real life interpretation of the Bulgarian solitaire.}
Discrete dynamical systems often have economic or biological interpretations.
The Bulgarian solitaire reflects the following situation from the real life.
{\it Consider a company consisting
of a number of departments. The Board of Directors decides
to create a new department, but does not want to increase the total number of employees.
So, the Board takes a member from the existing departments and move the person to the new department.}
If we assume that the number of cards in the piles
is equal to the number of persons in the departments,
the Bulgarian solitaire corresponds to the ``greediest'' case, when
the new department is formed by taking a
person from each department of the company.

\subsection{Austrian solitaire.}
Inspired by a discussion on the so-called Austrian school of capital theory, Akin and Davis \cite{AD} introduced the {\it Austrian solitaire}
which has the following economic interpretation. {\it A company has several machines. Each machine has, when new, a life of exactly $L$ years.
Each year for each machine on line the company deposits $1/L$ of its cost into the bank as a sinking fund.
Then it buys as many new machines as it can afford, and the remaining funds are left in the bank until next year.}
Now, {\it take a pack of cards and divide it in piles in such a way that each pack contains not more than $L$ cards.}
Think of the piles as machines. The number of the cards is equal to the number of productive years left for a particular machine.
{\it One of the piles is specific.} It is the bank and does not correspond to a machine. {\it Each move of the solitaire consists of two steps.
In the first step we remove one card from each of the ordinary piles} (the machines have one year less to live)
{\it and add the cards to the pile of the bank. In the second step we take $L$ cards from the bank and form a new ordinary pile of size $L$}
(we buy a new machine) {\it and continue this process until the bank contains $<L$ cards. The problem is to describe the cycles of the corresponding
dynamical system.} For enumeration problems related with the Austrian solitaire see the Baccalaureate Degree Thesis of Bastola \cite{Ba}.

\subsection{Carolina solitaire.}
When visiting the University of South Carolina, Columbia, Andrey Andreev from the Institute of Mathematics and Informatics
at the Bulgarian Academy of Sciences introduced a new ordered variation of the Bulgarian solitaire called the {\it Carolina solitaire.
The game begins with $n$ cards divided into a row of piles of sizes $\alpha_1,\ldots,\alpha_c$, $\alpha_1+\cdots+\alpha_c=n$, $\alpha_i>0$.}
Hence we work with {\it compositions}, i.e., ordered systems of positive integers,
$(\alpha_1,\ldots,\alpha_c)$ rather than with partitions $\lambda=(\lambda_1,\ldots,\lambda_c)\vdash n$.
{\it The move consists of removing one card from each pile, and then placing these $c$ cards
in a pile ahead of the others. Any exhausted pile} ({\it of size} 0) {\it is ignored and
only nonempty piles are considered.} In other words, {\it if $\mathcal C$ is the operator of the Carolina solitaire, then, up to the ignored empty
piles} (obtained for $\alpha_i=1$), {\it the action is defined by}
\[
{\mathcal C}(\alpha_1,\ldots,\alpha_c)=(c,\alpha_1-1,\ldots,\alpha_c-1), \quad \alpha_i>0.
\]
{\it For a triangular number $n=k(k+1)/2$
this new game also appears to arrive at a stable division, with piles of sizes $k, k-1,\ldots,2,1$.}
Griggs and Ho \cite{GH} {\it derived upper and lower bounds for the maximum number of moves required to reach a cycle
of the graph of the compositions of $n$.}
See also the Master Thesis of Tambellini \cite{Ta} for other properties of the Carolina solitaire.

\subsection{Montreal solitaire.}
It is suggested by Cannings and Haigh \cite{CH}. The positions are compositions $\alpha=(\alpha_1,\ldots,\alpha_c)$ of nonnegative integers.
Identifying the compositions $(\alpha_1,\ldots,\alpha_c)$, $(0,\alpha_1,\ldots,\alpha_c)$, and $(\alpha_1,\ldots,\alpha_c,0)$, we may consider
only the case when $\alpha_1$ and $\alpha_c$ are positive. The successor rule $\mathcal M$ of the {\it Montreal solitaire} is defined in the following
way. {If all $\alpha_i$ in $\alpha=(\alpha_1,\ldots,\alpha_c)$ are positive, then
\[
{\mathcal M}(\alpha)=(\alpha_1-1,\ldots,\alpha_c-1,c).
\]
Then we extend the action of $\mathcal M$ inductively. If
\[
\alpha=(\beta,\underbrace{0,\ldots,0}_{r\text{ times}},\gamma)=(\beta,0^r,\gamma),
\quad \beta=(\beta_1,\ldots,\beta_c),\beta_i>0,\gamma=(\gamma_1,\ldots,\gamma_d),
\]
then
\[
{\mathcal M}(\alpha)=({\mathcal M}(\beta,0^{r-1},{\mathcal M}(\gamma)),
\]
keeping the $0$'s in the beginning of ${\mathcal M}(\gamma)$.}
For example,
\[
{\mathcal M}(\text{\bf 1},0,\text{\it 2})=(\text{\bf 0,1},\text{\it 1,1})=(0,1,1,1)(=(1,1,1))
\]
and
\[
{\mathcal M}(\text{\bf 1,2},0,{\it 1,0,2})=(\text{\bf 1,2},\text{\it 0,1,1,1}).
\]
Another example is
\[
(3,2,2)\stackrel{\mathcal M}{\to}(2,1,1,3)\stackrel{\mathcal M}{\to}(1,0,0,2,4)\stackrel{\mathcal M}{\to}(1,0,1,3,2)\stackrel{\mathcal M}{\to}(1,0,2,1,3)
\stackrel{\mathcal M}{\to}(1,1,0,2,3)
\]
and one can check that ${\mathcal M}^{18}(3,2,2)=(3,2,2)$.
In contrast to the Bulgarian solitaire, {\it in the Montreal solitaire each position $\alpha$ has a unique predecessor ${\mathcal M}^{-1}(\alpha)$
and there exists a positive integer $p$ such that ${\mathcal M}^p(\alpha)=\alpha$. Hence the set of compositions $(\alpha_1,\ldots,\alpha_c)$
of $n$ with positive $\alpha_1$ and $\alpha_c$ is a union of disjoint cycles.} We refer to \cite{CH} for more properties of the game.

\subsection{Other discrete generalizations.}
There are also several other games motivated by the Bulgarian solitaire.
As in the case of the regular Bulgarian solitaire, the problems studied concern the type and the number of cycles,
the Garden of Eden positions, etc. We shall list a couple of generalizations.

Locke \cite{L} invented the {\it Red-green Bulgarian solitaire} where the cards are colored in two colors, red and green, and the moves depend
on the existence of green cards in each pile.

Grensj\"o \cite{Gr} studied the {\it Three-dimensional Bulgarian solitaire}.
The idea is {\it to define the game on plane partitions,
which can be visualized using three-dimensional Young diagrams.}

\"Ohman \cite{O} considered two generalizations: the {\it Dual Bulgarian solitaire} and the {\it Multiplayer Bulgarian solitaire}.
In the dual game {\it the piles are ordered in nonincreasing order. In each move the largest pile is removed and its cards are distributed
to the remaining piles one by one, from larger to smaller, with any excessive blocks forming piles of size} 1.
For example, the partition $({\bf 4},4,3,2,2,1,1)$ goes to $(5,4,3,3,1,1)=(4+{\bf 1},3+{\bf 1},2+{\bf 1},2+{\bf 1},1+{\bf 0},1+{\bf 0})$, and
$({\bf 6},6,3,2,1)$ goes to $(7,4,3,2,1,1)=(6+{\bf 1},3+{\bf 1},2+{\bf 1},1+{\bf 1},0+{\bf 1},0+{\bf 1})$.
{\it In the regular Bulgarian solitaire we may assume that each part of a partition corresponds to the income of a citizen or a company.
Then the Government collects the same taxes from each person and each company and uses the collected money to make a new company.}
In the dual game, see the comments in \cite{Gr}, {\it one applies the principles of Robin Hood: taking from the rich and giving to the
poor.} It can be shown that {\it if Robin Hood continues to take from the rich and give
to the poor, then the distribution of fortune in his community will become close to triangular.} The explanation is simple. {\it If
one translates the game in the cradle model, it is easy to see that the dual Bulgarian solitaire is equivalent to the regular one.}
Bouchet \cite{Bo1, Bo2}, see also Bruhn \cite{Bru}, established that {\t the dual Bulgarian solitaire corresponds to the
old African game {\it Owari} which consists of cyclically ordered pits that are filled with pebbles.
In a sowing move all the pebbles are taken out of one pit and distributed one by one in subsequent pits.
Repeated sowing will give rise to recurrent states of the owari.}

One can interpret the multiplayer game in the following way.
{\it Several players sitting around a circular table play the Bulgarian
solitaire. All players remove one card from each of their piles at the same time and then pass
this new pile to the player on their right.} In other words, if we have a collection of partitions
\[
\lambda^{(1)}=(\lambda_1^{(1)},\ldots,\lambda_{c_1}^{(1)}),\ldots,\lambda^{(s)}=(\lambda_1^{(s)},\ldots,\lambda_{c_s}^{(s)}),
\quad \lambda_{c_i}^{(i)}>0,
\]
the move sends $\lambda^{(i)}$ to $(c_{i-1},\lambda_1^{(i)}-1,\ldots,\lambda_{c_i}^{(i)}-1)$, where by convention $c_0=c_n$
and the parts of the image of $\lambda^{(i)}$ are rearranged in nonincreasing order if necessary.

Servedio and Yeh \cite{SY} suggested a game which can be interpreted in the following way.
{\it There are $c$ players sitting around a circular table. The $i$-th player has $\alpha_i$ cards.} (We consider circular compositions
on $n$, identifying $\alpha=(\alpha_1,\ldots,\alpha_c)$ and $(\alpha_c,\alpha_1,\ldots,\alpha_{c-1})$.)
{\it The move consists of the following simultaneous actions of the players. The $i$-th one takes one's cards and distributes them clockwise,
to oneself and to the following $\alpha_i-1$ players.}

Janetzko in his Ph. D. Thesis \cite{J}
considered a similar game {\it with $c$ players around a circular table and with total number of $n$ cards.
A pointer points one of the persons} (e.g., the $i$-th one) {\it who takes all cards from his or her pile and distributes
them to all players on the right, giving one card to the $(i+1)$-th player, one card to the $(i+2)$-th player, etc.} (the addition is modulo $c$).
{\it At the end the pointer points at the player that receives the last card.
Repeating this procedure gives a periodic sequence of pointer positions.}
The thesis studies the question which periodic sequences can be realized as such pointer sequences.
It is interesting to mention that the problem is reduced to the investigation of an inhomogeneous linear system of equations.
Then the author applies the Perron-Frobenius theorem, \cite{P} and \cite{F},
{\it which asserts that a real square matrix with positive entries has a unique
largest real eigenvalue and that the corresponding eigenvector has strictly positive components,
and also asserts a similar statement for certain classes of nonnegative matrices.}

\subsection{Stochastic Bulgarian solitaires.}
There are many possible ways to formulate stochastic versions of the
Bulgarian solitaire. Popov \cite{Po} introduced his {\it Random Bulgarian solitaire}. As in the regular Bulgarian solitaire,
{a deck of $n$ cards is divided into several piles. Then one fixes a number $p\in (0,1]$ and, for each pile,
one leaves it intact with probability $1-p$ and removes one card from the pile with probability $p$, independently of the other piles.
The cards that are removed are collected to form a new pile.} For $p=1$ this is the regular, or deterministic, Bulgarian solitaire.
{\it The model with parameter $0<p<1$  is a discrete-time irreducible
and aperiodic Markov chain on the space of unordered partitions of $n$.
For the stationary measure of the game Popov proves that most of its mass is concentrated on {\rm (}roughly{\rm )}
triangular configurations of a certain type.}
Eriksson and Sj\"ostrand \cite{ES} showed that {\it the random Bulgarian solitaire
can be interpreted as a birth-and-death process on Young diagrams.}

Recently Eriksson, Jonsson, and Sj\"ostrand \cite{EJS} introduced another {\it Stochastic Bulgarian solitaire}.
They assume that the selection acts on the cards rather than on the piles: {\it When forming a new pile by picking cards
from the old piles, every card is picked with a fixed probability $0 < p < 1$,
independently of all other cards.} They establish a surprising fact. {\it The solitaire
is not drawn to triangular configurations but to an exponential shape.}

\end{document}